\newtheorem{thm}{Theorem}[section]
\newtheorem{cor}[thm]{Corollary}
\newtheorem{prop}[thm]{Proposition}
\newtheorem{lem}[thm]{Lemma}
\theoremstyle{definition}
\newtheorem{defn}[thm]{Definition}
\newtheorem{rmk}[thm]{Remark}
\newtheorem*{ack}{Acknowledgments}
\numberwithin{equation}{section}
\newcommand{\ot}{\otimes}
\newcommand{\Tr}{\mathrm{Tr}}
\newcommand{\C}{\mathbb C}
\newcommand{\Z}{\mathbb {Z}}
\newcommand{\Om}{\Omega}
\title{A loop group extension of the odd Chern character}
\author{Scott O. Wilson}
  \address{Scott O. Wilson, Department of Mathematics, Queens College, City University of New York, 65-30 Kissena Blvd., Flushing, NY 11367}
  \email{scott.wilson@qc.cuny.edu}
\begin{document}
\maketitle

\begin{abstract}
We show that the universal odd Chern form, defined on the stable unitary group $U$, extends to the loop group $LU$ 
in a way that is closed with respect to an equivariant-type differential.
This provides an odd analogue to the Bismut-Chern form. We also describe the associated transgression form,
 the so-called Bismut-Chern-Simons form, and explicate some properties it inherits as a differential form on the space of maps of a cylinder into the stable unitary group. As a corollary, we obtain the Chern character homomorphism from odd K-theory to the periodic cohomology of the free loop space, represented geometrically on the level of differential forms.
\end{abstract}
\allowdisplaybreaks

\setcounter{tocdepth}{2}
\tableofcontents

\section{Introduction}

The Chern character homomorphism plays a fundamental role in  topology and geometry as it relates K-theory to ordinary cohomology. 
Several recent works have emphasized differential refinements of cohomology theories \cite{HS}, \cite{BS}, and in \cite{TWZ3}, the authors
show that a careful study of the odd Chern Character, represented geometrically by an odd differential form on the stable unitary group $U$, leads to a rather explicit and elegant differential refinement of odd K-theory.

On the other hand, the even degree Chern form, associated to a connection $\nabla$ on a bundle over $M$, has been shown by Bismut to extend to a differential form on the free loop space $LM$ that is closed with respect to an equivariant-type differential \cite{B}.  
Bismut introduced this differential form as a contribution to the integrand of a path integral over the free loop space which calculates (non-rigorously) the index of a certain twisted Dirac operator.  This work was subsequently reinterpreted in terms of cyclic homology by \cite{GJP}, with further generalizations and applications in several works. In  \cite{TWZ} the authors provide an analogous result for abelian gerbes, whereas in \cite{TWZ2} the relationship to 
even K-theory is illuminated. Finally, in \cite{Ha} and \cite{ST}, the Bismut-Chern form has been given a rigorous field theoretic interpretation, appearing naturally in the  dimensional reduction from a $1|1$ supersymmetric Euclidean field theory on M, to a $0|1$  field theory on $LM$.

This leads to the natural question of whether the odd Chern form on $U$ extends to a differential form on the loop group $LU$ in a way that is  closed with respect to the equivariant-type differential. We answer this question affirmatively by constructing this so-called 
odd Bismut-Chern form using transgression techniques we establish below. Interestingly this differential form on $LU$ is given by an explicit iterated integral  formula involving only the left invariant $1$-form on $U$ and its directional derivatives. We are optimistic that this will have a field theoretic interpretation as well.

The contents of this paper are as follows. In the next section we provide preliminary definitions and results concerning the complex of differential forms on free loopspaces that is used in the paper. In section \ref{sec;BCh} we provide background on 
the Bismut-Chern form, as well as a re-interpretation as a universal form on the free loopspace $LBU$. 
The next section provides a general method for constructing transgression forms on free loopspaces, and defines the transgression
form associated to the Bismut-Chern form, first introduced in \cite{TWZ2}. We call this the Bismut-Chern-Simons form 
since it restricts along constant loops to the Chern-Simons form.

In section \ref{sec;oddCh} we provide a brief review of the odd Chern form on $U$ and in the subsequent sections define the 
odd Bismut-Chern form and prove several fundamental properties. In particular, we give an explicit formula for the 
transgression of this form, which is an even differential form on the space of maps of cylinder into $U$,
whose value in degree zero is a lift of the ``winding number'' function. We close with an application which 
produces a lift of the the odd Chern Character homomorphism $Ch:K^{-1} \to H^{odd}$ to a natural transformation with image in the  
the completed periodic cohomology of the free loopspace.

\begin{ack}
I would like to thank Thomas Tradler and Mahmoud Zeinalian for helpful discussions concerning the topics of this paper. The author was supported in part by The City University of New York PSC-CUNY Research Award Program.
\end{ack}

\section{Preliminaries}
For a smooth manifold $M$, let $LM$ denote the space of smooth loops in $M$, considered as a diffeological (Chen) space \cite{C1}, \cite{C2}, or as a Fr\'echet space, \cite{H}. In fact, these structures agree, \cite{L}, \cite{Wa} Lemma A.1.7.

 The free loop space $LM$ has a natural vector field, given by the circle action, whose induced contraction operator on differential forms is denoted by $\iota$. We use the notation $\Om_{d + \iota}^j$ to denote the vector space of invariant complex valued differential forms of degree $j$, i.e. those that are in the kernel of $d \iota + \iota d$.

Let 
\[
\Omega_{d + \iota}^{even}(LM) =  \prod_{k \geq 0} \Omega^{\textrm{2k}}_{d+ \iota}(LM) ,
 \quad 
 \Omega_{d + \iota}^{odd}(LM) =  \prod_{k \geq 0} \Omega^{\textrm{2k+1}}_{d+ \iota}(LM). 
\]
The operator $d + \iota$ defines a $\Z_2$-graded complex and we denote the cohomology by 
 \[
 H^{\textrm{even}}_{d + \iota}(LM), 
 \quad H^{\textrm{odd}}_{d+ \iota}(LM).
 \]
The earliest reference for these groups are  \cite{W}, \cite{A}, and \cite{B},  with more recent related work in \cite{KM}.

 In \cite{JP} the authors show these cohomology groups are isomorphic to the \emph{completed periodic equivariant cohomology}
 $h^*_{S^1}(LM)$ in even and odd degrees, respectively. Recall this is the cohomology of the \emph{completed periodic equivariant complex} which is defined to be 
the differential graded ring $(\Omega(LM)[[u,u^{-1}]], d+ u \iota)$ consisting of formal power series in $u, u^{-1}$ with coefficients in 
$\Om_{d+ \iota}(LM)$, where $u$ is an indeterminant of degree $2$. Note this allows for elements with arbitrary high powers of $u^{-1}$, but
not $u$, since forms are concentrated in positive degrees.
All of the results  in this paper can be restated in terms in the completed periodic equivariant complex by introduction of this formal variables $u^{-1}$, and the complex with differential $d + \iota$ is recovered by setting $u=1$. 
 
Finally, it  follows from Theorem 2.1 of \cite{JP} that the inclusion of constant loops $\rho: M \to LM$ induces an isomorphism
  \[
\rho^*:  h^*_{S^1}(LM) \to u^{-1}H^*_{S^1}(M) = H^*(M) \otimes \C[u,u^{-1}]
 \]
 where $u^{-1}H^*_{S^1}(M)$ is the localization with respect to $u$ of the usual equivariant cohomology of $M$ (with the trivial circle action). 
 In summary, we have (for each $n \geq 0$) that the restriction to constant loops induces an isomorphism
\begin{eqnarray*}
H^{even}_{d + \iota}(LM)  \cong  \left( H^*(M) \otimes \C[u,u^{-1}] \right)_{2n} \cong  H^{even}(M) \\ 
H^{odd}_{d+ \iota}(LM)  \cong \left( H^*(M) \otimes \C[u,u^{-1}] \right)_{2n+1} \cong  H^{odd}(M). 
\end{eqnarray*}
 So we obtain an expression of this equivariant-type cohomology of the free loopspace in terms of the ordinary cohomology of $M$.

\section{The even Bismut-Chern form} \label{sec;BCh}

In \cite{B} the author showed that the even Chern form of a connection on a bundle extends to an 
$(d + \iota)$-closed even differential form on the free loopspace of the base. This  works was revisited in \cite{GJP}, in the context of cyclic homology.  In \cite{TWZ}, and \cite{TWZ2}, the authors re-interpreted the so-called Bismut-Chern as a $(d+ \iota)$-closed extension of the trace of holonomy function on the loopspace, and developed several properties with respect to sum and tensor products of connections.   We review this material in this section.

Let $E \to M$ be a complex vector bundle with connection $\nabla$. On any  single local trivialization $V$ of $M$, we can write the connection locally as a matrix $A$ of 
$1$-forms, with curvature $R$. There is a degree $2k$ form $Tr(hol^V_{2k})$  on $LV$ given by
\begin{equation} \label{eq:BCh^V_{2k}}
Tr (hol^V_{2k}) = Tr \left(\sum_{m \geq k} \, \,  \sum_{1 \leq j_1 < \dots < j_k \leq m} \int_{\Delta^m} X_1(t_1) \cdots X_m(t_m) dt_1 \cdots dt_m \right) ,
\end{equation}
where
\[
X_j (t_j) = \left\{
\begin{array}{rl}
R(t_j) & \text{if  } j \in \{ j_1, \dots , j_k\} \\
\iota A(t_j)  & \text{otherwise}
\end{array} \right.
\]
Here $R(t_j)$ is a $2$-form taking in two vectors at $\gamma(t_j)$ on a loop $\gamma \in V$ , and 
$\iota A(t_j) = A (\gamma'(t_j))$. Note that $Tr(hol_0)$ is simply the trace of the 
usual holonomy, and heustically $Tr(hol^V_{2k}) $ is given by the same formula for the trace of holonomy, except with exactly $k$ copies of the function $\iota A$ replaced by the $2$-form $R$, summed over all possible places.

More generally, a global form is defined on $LM$ is defined using a open covering of $LM$ induced by an open covering of $M$ over which the bundle is locally trivialized. The formula above, essentially glued together by transition data, is shown to define an even differential form on $LM$, independent of choice of local data, which we denote by $Tr(hol_{2k})$, \cite{TWZ}.
We define
\[
BCh(\nabla)= \sum_{k \geq 0} Tr(hol_{2k}).
\]

\begin{prop} (\cite{B}) \label{prop:BChproperties}
For any connection $\nabla$ on a complex vector bundle $E \to M$, we have
\[
(d+ \iota) BCh(\nabla) = 0.
\]
 Therefore, $BCh(\nabla) $ determines a class $[BCh(\nabla)] \in \Om^{even}_{S^1}(LM)$. Moreover, we have
\[
\rho^* BCh(\nabla) = Ch(\nabla)
\]
where $\rho^* :\Om^{even}_{S^1} (LM) \to \Om^{even}(M)$ is the restriction to constant loops and $Ch(\nabla) = Tr\left( e^ R  \right) 
\in \Om^{even}(M)$ is the ordinary even Chern character.
\end{prop}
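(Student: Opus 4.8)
The plan is to work locally over a trivializing open cover of $M$ and verify the $(d+\iota)$-closedness of the local expression $Tr(hol^V_{2k})$, then assemble the global statement from the fact (cited from \cite{TWZ}) that the local forms glue. The key computational input is that the iterated-integral operators $\iota A$ and $R$ interact with $d$ and $\iota$ in a controlled way: Chen's formula for the exterior derivative of an iterated integral produces boundary terms from the simplex $\Delta^m$ (evaluating at adjacent equal parameters) and terms where $d$ hits an individual entry. First I would record the basic identities $d(\iota A) = \iota R - \iota(dA) + \dots$ — more precisely, using $R = dA + A\wedge A$ one gets $\iota R = d(\iota A) + [\iota A, \text{(something)}]$-type relations — and the structural fact that $\iota$ applied to the integrand $X_1(t_1)\cdots X_m(t_m)$ differentiates one factor at a time, turning a $2$-form entry $R(t_j)$ into a $1$-form $\iota R(t_j)$ while leaving the $\iota A$ entries essentially alone (they are already contracted). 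The goal is to show that when one sums $dTr(hol^V_{2k})$ and $\iota Tr(hol^V_{2k-2})$ (note the degree shift: $\iota$ lowers degree by one, so closedness of $BCh(\nabla)=\sum_k Tr(hol_{2k})$ means $dTr(hol_{2k}) + \iota Tr(hol_{2k+2}) = 0$ for each $k$) all terms cancel.

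The heart of the cancellation is a telescoping/pairing argument. The exterior derivative $dTr(hol^V_{2k})$ contributes (i) simplex-boundary terms in which two adjacent factors $X_j(t_j)X_{j+1}(t_{j+1})$ collide at $t_j = t_{j+1}$, and (ii) interior terms where $d$ acts on some $X_j$. Terms of type (i) in which two $\iota A$'s collide cancel in pairs against each other or against the product $(\iota A)(\iota A)$ coming from $\iota R$ applied elsewhere; the endpoint boundary terms $t_1 = 0$ and $t_m = 1$ combine using the cyclicity of the trace (closing up the loop). Terms of type (ii): $d(\iota A)$ produces, via the structure equation, a piece that is exactly $\iota R$ in the same slot — and $\iota R$ with $k$ other $R$'s is precisely what appears in $\iota Tr(hol^V_{2k+2})$ (one of the $k+1$ curvature insertions gets contracted). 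So the $d$-of-$A$ terms match the $\iota$-of-$R$ terms slot by slot, while the $A\wedge A$ part of $dA$'s correction cancels against collision terms of type (i). I would organize this as: expand $dTr(hol_{2k})$ into named families, expand $\iota Tr(hol_{2k+2})$ likewise, and exhibit an explicit bijection between the surviving families with opposite signs — this is the standard Chen-theoretic bookkeeping underlying the Bismut-Chern form, and I would cite \cite{TWZ}, \cite{GJP} for the detailed sign analysis rather than reproduce it.

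For the second assertion, restriction to constant loops, I would observe that along a constant loop $\gamma(t) \equiv x$ the velocity $\gamma'(t) = 0$, so every factor $\iota A(t_j) = A(\gamma'(t_j))$ vanishes. Hence in the local formula only the terms with $m = k$ and all $X_j = R$ survive, and the iterated integral $\int_{\Delta^k} R(t_1)\cdots R(t_k)\,dt_1\cdots dt_k$ collapses (the integrand is constant in the $t_i$, and $R(t_j)$ becomes just the curvature $2$-form $R$ at $x$) to $\frac{1}{k!}R^k$. Summing over $k$ gives $\rho^* BCh(\nabla) = Tr\bigl(\sum_k \tfrac{1}{k!}R^k\bigr) = Tr(e^R) = Ch(\nabla)$. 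That the local computations patch to the claimed global identity is immediate once one knows $BCh(\nabla)$ itself is globally defined, which is part of the cited construction. Finally, since $(d+\iota)BCh(\nabla) = 0$ and $BCh(\nabla)$ is $S^1$-invariant by construction, it represents a class in $H^{even}_{d+\iota}(LM)$, completing the proposition.

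The main obstacle I anticipate is the sign and boundary-term bookkeeping in the first part: organizing the boundary contributions of $\partial\Delta^m$ together with the Leibniz terms so that the match with $\iota BCh$ is transparent, and correctly tracking the degree shifts and the cyclic-invariance identifications under the trace. This is precisely the kind of calculation that is clean in principle but error-prone in practice, so I would lean heavily on the established treatment in \cite{TWZ}, \cite{GJP} and present the argument at the level of the structural bijection rather than term-by-term.
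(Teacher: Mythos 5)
Your outline is correct and matches the standard argument: the paper itself offers no proof of this proposition (it is cited from \cite{B}, with the gluing and local formulas taken from \cite{TWZ}), and your two-part strategy --- Chen's formula for $d$ of an iterated integral producing simplex-boundary and Leibniz terms that cancel slot-by-slot against $\iota\,Tr(hol_{2k+2})$ via the structure equation $R = dA + A\wedge A$, followed by the observation that on constant loops only the all-$R$ terms survive and $\int_{\Delta^k}$ contributes $1/k!$ --- is exactly the computation carried out in those references. Your degree bookkeeping ($d\,Tr(hol_{2k}) + \iota\,Tr(hol_{2k+2}) = 0$) and the restriction computation $\rho^*BCh(\nabla) = Tr(e^R)$ are both right, so deferring the sign analysis to \cite{TWZ}, \cite{GJP} leaves you no less complete than the paper.
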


We have omitted the scalar factor $1/2 \pi i$ that is sometimes used in the definition of the even Chern character for its
relation to integral cohomology. By replacing  $R$ by $R/2 \pi i$ in the definition of the Bismut-Chern form, we obtain 
a form that restricts to $Tr\left( e^{R/ 2 \pi i}  \right)$, and is closed with respect to $d + (2 \pi i) \iota$. 
 This operator produces a complex that is isomorphic to the one used here.
In \cite{TWZ2} the following properties are proved.

\begin{thm} (\cite{TWZ2}, Theorem 3.3) \label{thm:BChsumtensor}
Let $(E,\nabla) \to M$ and $(\bar E, \bar \nabla) \to M$ be complex vector bundles with connections. 
Let $\nabla \oplus \bar \nabla$ be the induced connection on $E \oplus \bar E \to M$, and 
$\nabla \otimes \bar \nabla := \nabla \otimes Id + Id  \otimes \bar \nabla$ be the induced connection on $E \otimes \bar E \to M$. Then
\[
BCh(\nabla \oplus \bar \nabla) = BCh(\nabla) + BCh(\bar \nabla)
\]
and
\[
BCh(\nabla \ot \bar \nabla) = BCh(\nabla) \wedge BCh(\bar \nabla).
\]
\end{thm}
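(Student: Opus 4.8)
The plan is to reduce the statement to a computation on a single local trivialization and then glue, using the fact from Proposition~\ref{prop:BChproperties} that both sides are already known to be $(d+\iota)$-closed global forms, so it suffices to match them locally. First I would recall that for the direct sum connection $\nabla \oplus \bar\nabla$, in a common local trivialization $V$ the connection matrix is block diagonal, $A \oplus \bar A = \begin{pmatrix} A & 0 \\ 0 & \bar A \end{pmatrix}$, with curvature $R \oplus \bar R$ similarly block diagonal. Feeding block-diagonal matrices into the iterated-integral trace formula~\eqref{eq:BCh^V_{2k}}, the product $X_1(t_1)\cdots X_m(t_m)$ is block diagonal with blocks the corresponding products for $A,R$ and for $\bar A,\bar R$; since $\Tr$ of a block-diagonal matrix is the sum of the traces of the blocks, the integrand splits as a sum, giving $\Tr(hol^V_{2k})(\nabla \oplus \bar\nabla) = \Tr(hol^V_{2k})(\nabla) + \Tr(hol^V_{2k})(\bar\nabla)$ degreewise, hence $BCh(\nabla\oplus\bar\nabla) = BCh(\nabla) + BCh(\bar\nabla)$ after summing over $k$ and gluing over the cover.

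For the tensor product the local connection matrix is $A \otimes I + I \otimes \bar A$ and the curvature is $R \otimes I + I \otimes \bar R$ (there is no mixed term because the two factors are $d$-closed against each other in the appropriate sense; more precisely the curvature of $\nabla\otimes Id + Id\otimes\bar\nabla$ is $R\otimes Id + Id\otimes\bar R$). The key algebraic input is the shuffle-product identity for iterated integrals: an iterated integral of a sum of ``forms on the two factors'' over a simplex $\Delta^m$ expands, via the standard decomposition of $\Delta^p \times \Delta^q$ into $(p,q)$-shuffles, into a sum over all interleavings of an iterated integral purely in the first factor's data with one purely in the second factor's data. I would set up the bookkeeping so that each term $X_j(t_j)$ is $R\otimes I$, $I\otimes\bar R$, $\iota A \otimes I$, or $I\otimes \iota\bar A$, observe that the matrix product of such tensor-factored terms is the tensor product of the two sub-products (using $(a\otimes I)(I\otimes \bar b) = a\otimes\bar b = (I\otimes\bar b)(a\otimes I)$ only after the shuffle has separated the orderings — actually one must be careful: $(a\otimes I)$ and $(c\otimes I)$ do \emph{not} commute, so the shuffle must preserve the internal order within each factor), and then $\Tr(P\otimes\bar P) = \Tr(P)\Tr(\bar P)$. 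Matching the shuffle expansion of $Tr(hol^V_{2k})(\nabla\otimes\bar\nabla)$ against the degreewise product $\sum_{i+j=k} Tr(hol^V_{2i})(\nabla)\wedge Tr(hol^V_{2j})(\bar\nabla)$ term by term then yields $BCh(\nabla\ot\bar\nabla) = BCh(\nabla)\wedge BCh(\bar\nabla)$ locally, and gluing finishes it.

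The main obstacle I anticipate is the tensor-product case, specifically making the shuffle-product identity precise in the presence of noncommuting matrix coefficients and the wedge product of the form-valued entries. One has to verify that the combinatorial decomposition of the product of two iterated integrals over simplices matches exactly the sum appearing in~\eqref{eq:BCh^V_{2k}} once one inserts exactly $i$ copies of $R\otimes I$ among the first-factor slots and $j$ copies of $I\otimes\bar R$ among the second-factor slots — i.e., that the ``$k$ copies of $R$ summed over all possible places'' on the tensor bundle is exactly the shuffle of ``$i$ copies of $R$ over all places'' with ``$j$ copies of $\bar R$ over all places.'' This is the standard fact that the Bismut-Chern form is (a version of) Chen's iterated integral construction applied to the Chern character, which is multiplicative, but writing it cleanly at the level of these explicit local formulas requires care with signs coming from commuting odd-degree form entries past one another and with the precise identification of the time-ordered simplex $\Delta^m$ with the union of products $\Delta^i_{\text{(shuffled into)}} \Delta^j$. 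Once that identity is in hand the rest is routine, and I would then invoke that both sides are globally well-defined $(d+\iota)$-closed forms (Proposition~\ref{prop:BChproperties}) to promote the local equalities to the claimed global identities.
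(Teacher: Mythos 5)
The paper does not prove this theorem; it is quoted from \cite{TWZ2} (Theorem 3.3), so there is no in-paper argument to compare against. Your outline is nonetheless the correct and standard one, and it is essentially the argument of the cited reference: block-diagonality plus additivity of the trace for $\oplus$, and for $\otimes$ the cancellation of the cross terms in the curvature together with the shuffle decomposition of $\Delta^p\times\Delta^q$ and $\Tr(P\otimes\bar P)=\Tr(P)\Tr(\bar P)$. One small simplification you can claim for free: the sign bookkeeping you flag as the main obstacle is vacuous here, because every entry fed into the iterated integral \eqref{eq:BCh^V_{2k}} is an \emph{even}-degree form ($R$ is a $2$-form and $\iota A$ is a function on $LV$), so the factors $a\otimes I$ and $I\otimes\bar b$ commute with no Koszul signs and the shuffle identity applies verbatim.
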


\begin{rmk} \label{rmk;BChUniv}
 A complex vector bundle $E \to M$ of rank $k$ with hermitian connection $\nabla$ can always be given by the pullback along a map $M \to BU(k)$ of the universal bundle with universal connection over $BU(k)$, \cite{NR}.  
Here our model for $BU(k)$ is the limit of the Grassmann manifolds of $k$-planes in $\C^n$, with tautological bundles over them given by vectors in the $k$-planes. These bundles sit inside the trivial bundles with trivial connection given by exterior $d$, and the tautological sub-bundle has an induced
connection given by the composition of projection $P$ onto the $k$-plane composed with $d$, i.e. $\nabla = P \circ d$.
The curvature of this connection is given by $R = P (dP)^2$. In this way we can describe the
universal Bismut-Chern form on $L BU(k)$. An explicit formula is given by 
\begin{equation}
Tr \left(  \sum_{\stackrel{2k \geq 0}{m \geq k}} \, \,  \sum_{1 \leq j_1 < \dots < j_{2k} \leq m} \int_{\Delta^m} X_1(t_1) \cdots X_m(t_m) dt_1 \cdots dt_m \right) ,
\end{equation}
where
\[
X_j (t_j) = \left\{
\begin{array}{rl}
P(dP)^2(t_j) & \text{if  } j \in \{ j_1, \dots , j_{2k}\} \\
\iota P(dP)(t_j)  & \text{otherwise}
\end{array} \right.
\]

\end{rmk}

\section{Transgression forms on free loop spaces}
Assigning a differential form to some geometric data on a manifold often applies as well to a compact one-parameter family of the data. By integration along the family, we obtain a form of degree one lower whose exterior derivative is the 
difference between the differential forms assigned to the endpoint data. We explain now that this can also be accomplished for 
the free loopspace $LM$ and the operator $d + \iota$.

Let $I=[0,1]$. There is a map $j: L M \times I \to L(M \times I) = LM \times LI$ which is induced by the inclusion $I \to LI$ of 
constant loops. Note that $j^* : \Om^*(L(M \times I)) \to \Om^*(LM \times I)$ commutes with the contraction operators $\iota$ and exterior derivative $d$ so there is a well defined map $j^* : \Om^*_{d+\iota}(L(M \times I)) \to \Om^*_{d+\iota}(LM)$.

Consider the composition
\[
\xymatrix{
\Om^*(LM \times I)  \ar[d]_-{\int_I} & \Om^*(L(M \times I)) \ar[l]_-{j^*}\\
\Om^*(LM) & 
}
\]
where $\int_I$ is integration along the fiber $I$.  By Stokes' Theorem for integration along fiber, and the fact that for all forms $\omega$ we have
\[
\iota \int_I j^* \omega = \int_I \iota j^* \omega = \int_I j^* \iota \omega,
\]
it follows that we have the following \emph{equivariant Stokes' formula} for integration along the fiber 
\[
(d + \iota) \int_I j^*\omega = \int_{\partial I} j^* \omega + \int_I j^* (d+ \iota) \omega.
\]

\begin{defn} \label{defn:BCS}
Let $\nabla_s$ be a path of connections on a bundle $E \to M$, regarded as a bundle with connection over $M \times I$.
We define the Bismut-Chern-Simons form $BCS(\nabla_s) \in \Om^{odd}_{d + \iota}(LM)$ by
\[
BCS(\nabla_s) = \int_I j^* BCh(\nabla_s) .
\]
\end{defn}

From the discussion above, and the fact that $BCh$ is $(d+\iota)$-closed, we have  
\begin{equation} \label{eq:BCSStokes}
(d + \iota) BCS(\nabla_s) = BCh(\nabla_1) -  BCh(\nabla_0),
\end{equation}
and this implies $(d\iota + \iota d) BCS(\nabla_s) = 0$.

An explicit formula for $BCS(\nabla_s)$ is given in \cite{TWZ2}. For example, on any local trivialization $V \subset M$, the degree 
$2k+1$ component of this odd differential form on $LV \subset LM$ is given by 

\begin{multline} \label{eq:BCS^V_{2k+1}}
BCS^{V}_{2k+1} (A_s) = Tr\Bigg( \sum_{n\geq k+1}\sum_{\tiny\begin{matrix}{r,j_1,\dots,j_k=1}\\{\textrm{pairwise distinct}}\end{matrix}}^n
\\
 \int_0^1 \int_{\Delta^n}  \iota A_s(t_1)\dots R_s(t_{j_1}  \dots A'_s(t_r)\dots R_s (t_{j_k})\dots \iota A_s(t_n) \quad dt_1\dots dt_n ds 
  \Bigg)
 \end{multline}
 Here $A_s$ is the local expression of the connection with curvature $R_s$ and time derivative $A'_s=\frac{\partial A_s}{\partial s}$ . 
 A similar formula can be obtained, using 
 transition data, for expressing the entire odd form $BCS(\nabla_s)$ on $LM$, see \cite{TWZ2} where the following is also proved.

\begin{prop}  \label{prop:BCSprop}
For any path of connections $\nabla_s$ on $E \to M$, the differential form $BCS(\nabla_s)$ is an element of 
$\Om^{odd}_{d+ \iota}(LM)$
and satisfies
\[
\rho^* BCS(\nabla_s) = CS(\nabla_s),
\]
where $\rho^* :\Om^*_{d+\iota} (LM) \to \Om^*(M)$ is the restriction to constant loops and 
\begin{multline*} \label{eq:CS} 
CS(\nabla_s)\\
= \Tr \left( \int_0^1 \sum_{n\geq 0} \frac{1}{(n+1)!} 
\sum_{i=1}^{n+1} 
\overbrace{ R_s \wedge \dots\wedge R_s\wedge \underbrace{ \left( \nabla_s \right)' }_{i^{\text{th}}}\wedge R_s\wedge\dots\wedge R_s }^{n+1\text{ factors}} \right) ds 
\end{multline*}
is the Chern-Simons form associated to $\nabla_s$.
\end{prop}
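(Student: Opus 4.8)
The plan is to treat the two assertions of Proposition~\ref{prop:BCSprop} separately. The closedness statement is essentially immediate from the equivariant Stokes formula already established: $BCh(\nabla_s)$ is a globally well-defined even form on $L(M\times I)$ and $\int_I j^*$ lowers degree by one and commutes with $\iota$, so $BCS(\nabla_s)$ is a well-defined odd form on $LM$ (all of this being done degree by degree, so that the completed products cause no trouble); applying $d+\iota$ to \eqref{eq:BCSStokes} and using $(d+\iota)^2=d\iota+\iota d$ together with $(d+\iota)BCh(\nabla_i)=0$ for $i=0,1$ gives $(d\iota+\iota d)BCS(\nabla_s)=0$, as was noted, so $BCS(\nabla_s)\in\Om^{odd}_{d+\iota}(LM)$.

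For the restriction to constant loops I would use naturality of integration along the fiber. Consider the square
\[
\xymatrix{
M \times I \ar[r]^-{\rho \times \id} \ar[d] & LM \times I \ar[d] \\
M \ar[r]^-{\rho} & LM
}
\]
whose vertical maps are the projections of the trivial $I$-bundles; it is Cartesian, so $\rho^*\circ\int_I=\int_I\circ(\rho\times\id)^*$ on $\Om^*(LM\times I)$. Moreover the composite $j\circ(\rho\times\id)\colon M\times I\to L(M\times I)$ is precisely the inclusion $\rho_{M\times I}$ of constant loops. Hence
\[
\rho^*BCS(\nabla_s)=\int_I(\rho\times\id)^*j^*BCh(\nabla_s)=\int_I\rho_{M\times I}^*BCh(\nabla_s)=\int_I Ch(\nabla_s),
\]
the last equality being Proposition~\ref{prop:BChproperties} applied to the bundle $E$ with connection $\nabla_s$ regarded as a bundle with connection over $M\times I$; here $Ch(\nabla_s)=\Tr(e^{\widetilde R})$ with $\widetilde R$ the curvature of that connection on $M\times I$.

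It then remains to evaluate $\int_I\Tr(e^{\widetilde R})$. Taking $\nabla_s$ to have trivial covariant derivative in the $\partial_s$-direction, a direct calculation gives $\widetilde R=R_s+ds\wedge\nabla_s'$, where $\nabla_s'=\partial_s\nabla_s$ is the $\End(E)$-valued $1$-form recording the velocity of the path of connections. Since $ds\wedge ds=0$, the only part of $e^{R_s+ds\wedge\nabla_s'}$ relevant to $\int_I$ is the sum of words containing exactly one factor $ds\wedge\nabla_s'$, namely $\sum_{n\ge1}\tfrac1{n!}\sum_{i=1}^{n}R_s^{\,i-1}\wedge(ds\wedge\nabla_s')\wedge R_s^{\,n-i}$; pulling $ds$ to the front (no sign, since $R_s$ is even), taking the trace, and integrating over $s\in[0,1]$ yields exactly the displayed formula for $CS(\nabla_s)$ after the substitution $n=m+1$. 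Alternatively, one can specialize the explicit local formula for $BCS^V_{2k+1}(A_s)$ above directly to constant loops: there $\iota A_s=A_s(\gamma')=0$, so only the summands with no $\iota A_s$ factor survive (forcing $n=k+1$), the integrand is constant on $\Delta^{k+1}$ so that $\int_{\Delta^{k+1}}$ contributes $1/(k+1)!$, and the sum over the insertion position of $A_s'$ reproduces the symmetrized Chern-Simons integrand.

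The only genuinely delicate points I anticipate are the naturality of fiber integration along the Cartesian square above --- in particular the identity $j\circ(\rho\times\id)=\rho_{M\times I}$ and the fact that $\rho^*\int_I=\int_I(\rho\times\id)^*$ remains valid in the diffeological (or Fr\'echet) framework in use here --- together with the sign bookkeeping in the graded-commutativity and trace manipulations of the final step; the rest is either formal or a routine computation.
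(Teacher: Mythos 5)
The paper does not actually prove Proposition \ref{prop:BCSprop}: it is quoted from \cite{TWZ2}, with only the remark immediately after Equation \eqref{eq:BCSStokes} that $(d\iota+\iota d)BCS(\nabla_s)=0$. Your argument is a correct, self-contained reconstruction, and both halves are consistent with the machinery the paper does set up. For membership in $\Om^{odd}_{d+\iota}(LM)$ you use exactly the paper's observation (apply $d+\iota$ to \eqref{eq:BCSStokes} and use $(d+\iota)^2=d\iota+\iota d$ together with closedness of $BCh(\nabla_0)$ and $BCh(\nabla_1)$). For the restriction statement, your base-change argument $\rho^*\circ\int_I=\int_I\circ(\rho\times\id)^*$ combined with $j\circ(\rho\times\id)=\rho_{M\times I}$ and Proposition \ref{prop:BChproperties} over $M\times I$ is clean and arguably more structural than what one finds in \cite{TWZ2}, which works from the explicit iterated-integral formula; your second, alternative route (killing all $\iota A_s$ factors on constant loops so that only $n=k+1$ survives, with $\int_{\Delta^{k+1}}1=1/(k+1)!$) is essentially the computation in the reference. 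The final evaluation of $\int_I\Tr(e^{\widetilde R})$ with $\widetilde R=R_s+ds\wedge\nabla_s'$ is correct, including the sign bookkeeping (since $R_s$ has even degree, moving $ds$ to the front costs nothing), and the convention that the connection on $M\times I$ has no $ds$-component is the one implicitly used in \eqref{eq:BCS^V_{2k+1}}. The only point worth stating explicitly is that every constant loop lies in some $LV$, so checking the restriction on local trivializations suffices; otherwise I see no gaps.
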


Moreover the following multiplicative properties hold, see \cite{TWZ3}.

\begin{thm} \label{prop:BCSsumtensor}
Let $E\to M$ and $\bar E \to M$ be two complex vector bundles, each with a path of connections $(E,\nabla_s) $ and $(\bar E,\bar \nabla_s)$ for $s \in [0,1]$, respectively. 
Let $\nabla_s \oplus \bar \nabla_s$ be the induced path of connections on $E \oplus \bar E$, and let 
$\nabla_r \otimes \bar \nabla_s:= \nabla_r \otimes  Id + Id \otimes \bar \nabla_s$ be the induced connections on $E \otimes \bar E$ for any $r,s\in [0,1]$. Then
\[
BCS(\nabla_s \oplus \bar \nabla_s) = BCS(\nabla_s) + BCS(\bar \nabla_s)
\]
and
\[
BCS(\nabla_0 \ot  \bar \nabla_s) = BCh(\nabla_0) \wedge BCS( \bar \nabla_s) \quad \quad 
BCS(\nabla_s \ot  \bar \nabla_1) = BCS(\nabla_s ) \wedge BCh( \bar  \nabla_1) .
\]
Finallly, for two composable paths of connections $\nabla_s$ and $\bar \nabla_s$, where
$\nabla_1 = \bar \nabla_0$, we have
\[
BCS(\nabla_s  * \bar \nabla_s) = BCS(\nabla_s) + BCS(\bar \nabla_s).
\]
\end{thm}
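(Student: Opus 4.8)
The plan is to deduce all the stated identities from the corresponding multiplicative properties of $BCh$ in Theorem~\ref{thm:BChsumtensor}, applied over $M\times I$, together with the linearity of $j^*$ and $\int_I$ and the projection formula for integration along the fiber. The first step is to recall precisely what $BCS$ is: in Definition~\ref{defn:BCS} the symbol $BCh(\nabla_s)$ denotes the Bismut-Chern form of the bundle $E\times I\to M\times I$ equipped with the connection $\widehat\nabla$ determined by the path $\nabla_s$, which locally has connection matrix $A_s$, a family of matrices of $1$-forms on a trivializing set parametrized by $I$ and with no $ds$-component. For the direct sum, I would observe that the connection over $M\times I$ determined by $\nabla_s\oplus\bar\nabla_s$ is exactly $\widehat\nabla\oplus\widehat{\bar\nabla}$, so Theorem~\ref{thm:BChsumtensor} over $M\times I$ gives $BCh(\nabla_s\oplus\bar\nabla_s)=BCh(\nabla_s)+BCh(\bar\nabla_s)$ in $\Om^{even}(L(M\times I))$; applying the linear maps $j^*$ and then $\int_I$ yields $BCS(\nabla_s\oplus\bar\nabla_s)=BCS(\nabla_s)+BCS(\bar\nabla_s)$.

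For the tensor identities, write $p\colon M\times I\to M$ for the projection. The key observation is that the constant path at $\nabla_0$ corresponds over $M\times I$ to the pullback connection $p^*\nabla_0$, so that the connection determined by $\nabla_0\ot\bar\nabla_s$ is the tensor product $(p^*\nabla_0)\otimes\widehat{\bar\nabla}$. Theorem~\ref{thm:BChsumtensor} then gives
\[
BCh\bigl((p^*\nabla_0)\otimes\widehat{\bar\nabla}\bigr)=BCh(p^*\nabla_0)\wedge BCh(\widehat{\bar\nabla}).
\]
Next I would use naturality of the Bismut-Chern form (immediate from its construction in terms of holonomy and curvature; compare Remark~\ref{rmk;BChUniv}) to write $BCh(p^*\nabla_0)=(Lp)^*BCh(\nabla_0)$, and, since $Lp\circ j\colon LM\times I\to LM$ is the projection $\pi_1$, conclude $j^*BCh(p^*\nabla_0)=\pi_1^*BCh(\nabla_0)$. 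Applying $j^*$ and $\int_I$ to the identity above and invoking the projection formula for $\int_I$ gives
\begin{multline*}
BCS(\nabla_0\ot\bar\nabla_s)=\int_I \pi_1^*BCh(\nabla_0)\wedge j^*BCh(\widehat{\bar\nabla})\\
=BCh(\nabla_0)\wedge\int_I j^*BCh(\widehat{\bar\nabla})=BCh(\nabla_0)\wedge BCS(\bar\nabla_s),
\end{multline*}
with no sign because $BCh(\nabla_0)$ is even; the identity $BCS(\nabla_s\ot\bar\nabla_1)=BCS(\nabla_s)\wedge BCh(\bar\nabla_1)$ is handled symmetrically, again sign-free since $BCh(\bar\nabla_1)$ is even.

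For the concatenation identity I would first establish that $BCS$ is invariant under orientation-preserving reparametrization of the path parameter $s$: in the local formula \eqref{eq:BCS^V_{2k+1}} a substitution $s=\phi(u)$ produces a factor $\phi'(u)$ from $A'_u=\phi'(u)\,A'_{\phi(u)}$ which is absorbed by $ds=\phi'(u)\,du$, and this passes to the global form. Then, writing $\int_0^1=\int_0^{1/2}+\int_{1/2}^1$ and affinely reparametrizing each half onto $[0,1]$, one obtains $BCS(\nabla_s*\bar\nabla_s)=BCS(\nabla_s)+BCS(\bar\nabla_s)$; equivalently, integration along the fiber over $I$ is additive for the decomposition $I=[0,\tfrac12]\cup[\tfrac12,1]$, on which the concatenated connection restricts to reparametrizations of $\widehat\nabla$ and $\widehat{\bar\nabla}$ respectively.

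The main work is the bookkeeping identifying the path-connections over $M\times I$ with the sum and tensor-product connections, which must be done with care about the $ds$-components of the connections; once this is in place, everything reduces to Theorem~\ref{thm:BChsumtensor}, naturality of $BCh$, and standard properties of fiber integration. The one genuine subtlety is in the concatenation statement, where the concatenated path $\nabla_s*\bar\nabla_s$ is only piecewise smooth at $s=\tfrac12$; this is handled either by allowing piecewise-smooth families of connections (for which $BCh$, and hence $BCS$, is still given by the same formulas) or by smoothing out the concatenation near $s=\tfrac12$ and appealing to the reparametrization invariance just established.
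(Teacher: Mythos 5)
Your proposal is correct. Note, however, that the paper does not actually prove this theorem: it states the properties and defers to \cite{TWZ3} (and \cite{TWZ2}), where the analogous identities are verified by direct manipulation of the explicit iterated-integral formulas such as \eqref{eq:BCS^V_{2k+1}}, expanding the block-diagonal and tensor-product connection matrices inside the simplex integrals and re-sorting the resulting terms. Your route is different and, given how $BCS$ is defined in this paper, arguably cleaner: you identify the path-connections $\nabla_s\oplus\bar\nabla_s$, $\nabla_0\otimes\bar\nabla_s$, etc.\ with the corresponding sum and tensor-product connections over $M\times I$, apply Theorem~\ref{thm:BChsumtensor} there, and then push everything through $j^*$ and $\int_I$ using linearity, naturality of $BCh$ under pullback, and the projection formula (sign-free since the constant factor is even). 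This buys a proof that needs no new combinatorics beyond what Theorem~\ref{thm:BChsumtensor} already encodes, at the cost of having to check the bookkeeping you flag (no $ds$-component in the induced connections, naturality of $BCh$). Your treatment of the concatenation identity via reparametrization invariance and splitting the fiber integral at $s=\tfrac12$, including the remark about piecewise smoothness at the junction, is also sound and matches the spirit of the references. No gaps.
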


\begin{rmk} \label{rmk;BCSUniv}
As in Remark \ref{rmk;BChUniv}, we may regard this Bismut-Chern-Simons form $BCS(\nabla_s)$ 
as induced by a map $\phi: M \times I \to BU(k)$, and by the adjoint property there is a universal form 
$BCS \in \Om^{\textrm{odd}}_{d + \iota} (PLBU(k))$ defined by 
\[
BCS = \int_I ev_t^* BCh
\]
where $ev_t : PLBU \times I \to LBU$ is evaluation at time $t \in I$.
To see that this agrees with Definition \ref{defn:BCS}, let $\tilde \phi : M \to PBU$ denote the adjoint mapping
and note that the following diagram commutes
\[
\xymatrix{
\Om^*(LBU ) \ar[r]^-{ev_t^*} \ar[d]^-{(L(\phi))^*} & \Om^*(LP BU   \times I)  \ar[r]^-{\int_I} \ar[d]^-{(L(\tilde \phi) \times id)^*} &  \Om^*(LP BU ) \ar[d]^-{(L(\tilde \phi ))^*} \\
\Om^*(L(M \times I)) \ar[r]^-{j^*}  & \Om^*(LM \times I)  \ar[r]^-{\int_I} &  \Om^*(LM) 
}
\]
\end{rmk}

\begin{rmk}
In \cite{TWZ3} the authors show the Bismut-Chern-Simons form may be used to define a refinement of 
differential $K$-theory. We note that the construction applies to the category of $S^1$-spaces mapping into $L(BU \times \Z)$ with 
universal Bismut Chern form $BCh$, c.f. Remark \ref{rmk;BChUniv}. Then the loop differential
$K$-theory of \cite{TWZ3} is the special case of taking the free loop of a bundle with connection $M \to BU \times \Z$.
\end{rmk}

\section{The odd Chern form} \label{sec;oddCh}

Let $U$ denote the stable unitary group defined as the limit of the finite unitary groups $U(n)$. This is filtered by finite dimensional manifolds and so defines a diffeological space where a finite dimensional plot is a smooth map with image in some $U(n)$.
The group $U$ has a left invariant $1$-form with values in the Lie algebra $\mathfrak u$, which we denote by $\omega$. This generates the canonical bi-invariant closed odd differential form on $U$ given by 
\[
Ch =\Tr \sum_{n\geq 0} \frac{(-1)^n n!}{(2n+1)!}   \omega^{2n+1} \in \Om^{odd}_{cl}(U;\C).
\]
For a smooth map $g: M \to U$, the odd Chern form of $g$ is defined by pullback to be 
\[
Ch(g) = g^*(Ch) =  \Tr \sum_{n\geq 0} \frac{(-1)^n n!}{(2n+1)!}   (g^{-1}dg)^{2n+1} .
\]

We remark that the scalar factor of $1 / (2 \pi i)^{n+1} $ is omitted to be consistent with our convention for the even Chern form.
These factors may be introduced as mentioned above, where for $LM$ below we modifying the complex of forms to have operator $d + (2 \pi i) \iota$. The following lemma explains how the odd Chern form is related to the Chern-Simons form 
of Proposition \ref{prop:BCSprop}. 

\begin{lem} \label{lem;OddChAsCS}
 For any $g \in Map(M, U )$ and any $n$ such that $g(M) \subset U(n)$ we have
\[
Ch(g) = CS(d + s g^{-1} dg),
\] 
where $d + s g^{-1} dg$ is the straight path of connections on the trivial $\C^n$-bundle over $M$, from the trivial connection $d$  to the gauge equivalent flat connection $g^*(d) = d+g^{-1}dg$. 
\end{lem}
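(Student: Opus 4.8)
The plan is to compute the right-hand side $CS(d + s g^{-1}dg)$ directly from the explicit Chern--Simons formula in Proposition \ref{prop:BCSprop} and identify it with the odd Chern form. Write $\A_s = s\,\omega_g$ where $\omega_g := g^{-1}dg$, so that the path of connections on the trivial $\C^n$-bundle is $\nabla_s = d + \A_s$, with $\nabla_0 = d$ (trivial) and $\nabla_1 = d + g^{-1}dg = g^*(d)$ as claimed. Then $(\nabla_s)' = \frac{\partial \A_s}{\partial s} = \omega_g$, and the curvature is $R_s = d\A_s + \A_s \wedge \A_s = s\, d\omega_g + s^2\, \omega_g \wedge \omega_g$. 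The key simplification comes from the Maurer--Cartan equation $d\omega_g = -\omega_g \wedge \omega_g$ (which holds since $\omega_g = g^{-1}dg$ is a pullback of the left-invariant form $\omega$ satisfying $d\omega = -\omega\wedge\omega$), so that $R_s = (s^2 - s)\,\omega_g \wedge \omega_g$.

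Substituting into the formula for $CS(\nabla_s)$ from Proposition \ref{prop:BCSprop}, the $n$-th term is
\[
\Tr\left( \int_0^1 \frac{1}{(n+1)!}\sum_{i=1}^{n+1} R_s^{\wedge i-1}\wedge \omega_g \wedge R_s^{\wedge n+1-i}\, ds\right).
\]
All $n+1$ summands have the form $R_s^{\wedge a}\wedge \omega_g \wedge R_s^{\wedge b}$ with $a + b = n$, each a $(2n+1)$-form, and under the trace one can use cyclicity of $\Tr$ together with the fact that odd-degree forms anticommute past one another to check that all $n+1$ of them are equal; hence the $i$-sum contributes a factor $n+1$, cancelling against $(n+1)!$ to leave $\frac{1}{n!}$. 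Plugging in $R_s = (s^2-s)\,\omega_g^{2}$ gives $R_s^{\wedge n} = (s^2-s)^n\,\omega_g^{2n}$, so the $n$-th term becomes
\[
\frac{1}{n!}\Tr\left( \omega_g^{2n+1}\right) \int_0^1 (s^2 - s)^n\, ds.
\]
The scalar integral is a Beta integral: $\int_0^1 (s^2-s)^n\,ds = (-1)^n \int_0^1 s^n(1-s)^n\,ds = (-1)^n B(n+1,n+1) = (-1)^n \frac{(n!)^2}{(2n+1)!}$. Therefore the $n$-th term equals $\frac{(-1)^n n!}{(2n+1)!}\Tr(\omega_g^{2n+1}) = \frac{(-1)^n n!}{(2n+1)!}\Tr\big((g^{-1}dg)^{2n+1}\big)$, and summing over $n$ recovers exactly $Ch(g) = g^*(Ch)$.

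The main obstacle is the bookkeeping in showing that all $n+1$ terms in the $i$-sum coincide after applying $\Tr$: one must be careful that moving $\omega_g$ (a $1$-form valued in matrices) past a block $R_s^{\wedge k}$ (a $2k$-form) picks up sign $(-1)^{2k} = +1$ from the form-degree Koszul sign, so only the cyclic invariance of the matrix trace is actually needed and no sign is incurred. (Equivalently, one can invoke the standard fact that the Chern--Simons form of a linear path of connections with $\nabla_0$ flat reduces to this expression; but giving the direct computation is cleaner here since we need the precise normalization constants.) A secondary point worth stating explicitly is well-definedness: the right-hand side a priori depends on the embedding $U(n)\hookrightarrow U$, but since both sides are manifestly compatible with the stabilization maps $U(n)\hookrightarrow U(n+1)$ (adding a trivial summand to the bundle adds zero to $CS$ by Theorem \ref{prop:BCSsumtensor}, and $\omega$ is defined on the stable group $U$), the identity is independent of the choice of $n$.
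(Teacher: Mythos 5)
Your proposal is correct and follows essentially the same route as the paper's proof: compute $R_s = -s(1-s)\,\omega_g^2$ via the Maurer--Cartan equation, collapse the $i$-sum by cyclicity of the trace, and evaluate the resulting Beta integral $\int_0^1 s^n(1-s)^n\,ds = \frac{(n!)^2}{(2n+1)!}$. The extra remarks on Koszul signs and stabilization independence are fine but not needed beyond what the paper records.
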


The universal version of the statement is as follows: the trivial $\C^\infty$-bundle over 
the stable unitary group comes with a gauge transformation given by left action by $g$ on the fiber over $g$. The 
universal Chern form $Ch$ on $U$ equals the Chern Simons form of the straightline path of connections from the
 trivial connection $d$ to the transform of $d$ by the gauge transformation.

\begin{proof} Let $A = g^{-1}dg$  and $A^s = s g^{-1}dg$ so that $(A^s)'=A$ and $R^s=-s(1-s)A\cdot A$. Then 
\begin{eqnarray*}
CS(d + s g^{-1} dg) &=& \Tr  \sum_{n\geq 0} \frac 1 {(n+1)!}  \sum_{i=1}^{n+1} \int_0^1 R^s \dots \underbrace{(A^s)'}_{i\text{-th}}\dots R^s ds   \\
&=&   \sum_{n\geq 0} \frac 1 {n!}  \Tr \int_0^1 (-s(1-s))^n \underbrace{A\dots A}_{2n+1 \text{ factors}} ds \\
&=& \sum_{n\geq 0} \frac{(-1)^n}{n!} \frac{n!n!}{(2n+1)!}  \Tr(A^{2n+1})  \\
&=& \sum_{n\geq 0} \frac{(-1)^n n!}{(2n+1)!} \Tr(A^{2n+1})  \\ 
&=& Ch(g)
\end{eqnarray*}
where the identity $\int_0^1  s^k (1-s)^\ell ds=\frac{k!\ell!}{(k+\ell+1)!}$ was used.
\end{proof}

It is straightforward the check that  the odd Chern map 
\[
Ch: Map(M, \ U) \to \Omega^{\textrm{odd}}_{cl} (M)
\]
is a monoid homomorphism, i.e.
\[ 
Ch(  g \oplus h ) = Ch(g) + Ch(h),
\]
and furthermore satisfies $Ch(g^{-1}) = -Ch(g)$.  

The odd Chern form $Ch(g)$ depends on $g$, but for a smooth family $g_t : M \times I \to U(n) \subset U$, there is a smooth even differential form that interpolates between $Ch(g_1)$ and $Ch(g_0)$, in the following way. 
We define $CS(g_t) \in \Om^{\textrm{even}}(M)$ by
 \begin{equation}\label{eq:CS(gt)}
 CS(g_t) = \int_I Ch(g_t),
\end{equation}
where $Ch(g_t) \in  \Om^{\textrm{odd}}_{cl}(M\times I)$ and $\int_I$ is the integration along the fiber
\[
\xymatrix{
M \times I \ar[r]^-{g_t} \ar[d]_-{\int_I} & U\\
M & 
}
\]
By Stokes' theorem we have
\begin{equation*}
d CS(g_t) = d\int_I Ch(g_t) = \int_{\partial I} Ch(g_t) - \int_I d Ch(g_t) =Ch(g_1) - Ch(g_0)
\end{equation*}
since $d Ch(g_t) = 0$. By the usual adjunction, our definitions above give a universal form  $CS \in \Om^{even}(PU)$ where $PU$ is the smooth
pathspace of $U$.
 
 The following explicit formula will be useful. For a proof, see for example \cite{TWZ3}.
 \begin{lem} \label{oddCS}
 For any $g_t \in Map(M \times I, U )$ the \emph{odd Chern Simons form} 
 $CS(g_t)  \in  \Omega^{\textrm{even}} (M)$ associated to $g_t$  is  
\[
CS(g_t) = \Tr \sum_{n\geq 0} \frac{(-1)^n n!}{(2n)!} \int_0^1 (g_t^{-1} g_t') \cdot (g_t^{-1} dg_t)^{2n} dt .
 \]
\end{lem}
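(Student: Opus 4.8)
The plan is to compute $\int_I Ch(g_t)$ by hand, splitting the de Rham differential on $M\times I$ into its base and fibre parts. On $M\times I$ write $d = d_M + dt\wedge\tfrac{\partial}{\partial t}$, and set $A := g_t^{-1}d_M g_t$, a $\mathfrak{u}$-valued $1$-form on $M$ depending on the parameter $t$, and $B := g_t^{-1}g_t'$, a $\mathfrak{u}$-valued function on $M$ depending on $t$, so that $g_t^{-1}dg_t = A + B\,dt$. Since $\int_I$ is integration along the fibre, applied to the degree $2n+1$ form $\Tr((A+B\,dt)^{2n+1})$ it extracts exactly the monomials linear in $dt$ and integrates the resulting coefficient over $t\in[0,1]$; monomials with no $dt$ are annihilated, and those with two or more copies of $dt$ vanish identically.

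Next I would expand $(A+B\,dt)^{2n+1}$ and keep only the $2n+1$ terms linear in $dt$, namely $A^{i}(B\,dt)A^{2n-i}$ for $i=0,\dots,2n$. Moving the $dt$ to the front: $dt$ and each $A$ have odd degree while $B$ has even degree, so $A^{i}(B\,dt)A^{2n-i} = (-1)^{i}\,dt\,A^{i}BA^{2n-i}$, and the graded cyclicity of the trace gives $\Tr(A^{i}BA^{2n-i}) = (-1)^{i(2n-i)}\Tr(BA^{2n})$. The step that makes the lemma work is the observation that the total sign $(-1)^{\,i+i(2n-i)} = (-1)^{\,i(2n+1-i)}$ equals $+1$ for every $i$ (when $i$ is odd, $2n+1-i$ is even), so all $2n+1$ monomials contribute the same quantity $dt\,\Tr(BA^{2n})$. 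Hence
\[
\int_I \Tr\!\left((g_t^{-1}dg_t)^{2n+1}\right) = (2n+1)\int_0^1 \Tr\!\left((g_t^{-1}g_t')(g_t^{-1}dg_t)^{2n}\right)dt .
\]

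Finally I would substitute this into the series $Ch = \Tr\sum_{n\ge 0}\frac{(-1)^n n!}{(2n+1)!}\,\omega^{2n+1}$ defining $CS(g_t)=\int_I Ch(g_t)$, pull the scalar coefficients through the fibre integration, and use $\frac{2n+1}{(2n+1)!} = \frac{1}{(2n)!}$ to land on the asserted formula; on a finite-dimensional $M$ the sum is finite because the degree $2n$ summand vanishes once $2n > \dim M$. I do not expect a genuine obstacle, as the lemma is essentially a bookkeeping computation, but the one place that demands care is precisely that bookkeeping: the Koszul signs in products of matrix-valued forms, the graded cyclicity of $\Tr$, and the orientation convention implicit in $\int_I$, which must be chosen compatibly with the Stokes identity $d\,CS(g_t) = Ch(g_1) - Ch(g_0)$ recorded above. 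Once those conventions are pinned down, the load-bearing point is the sign cancellation $(-1)^{i(2n+1-i)}=1$, and everything else is routine.
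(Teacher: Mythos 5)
Your computation is correct: the decomposition $g_t^{-1}dg_t = A + B\,dt$, the extraction of the $2n+1$ monomials linear in $dt$, and the sign check $(-1)^{i(2n+1-i)}=1$ giving the factor $2n+1$ that converts $\frac{1}{(2n+1)!}$ into $\frac{1}{(2n)!}$ are exactly the right bookkeeping. The paper itself gives no proof of this lemma (it defers to \cite{TWZ3}), and your direct fiber-integration argument is the standard one found there, so there is nothing to add beyond pinning down the orientation convention for $\int_I$, which you already flag.
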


We restate the fundamental property for $CS(g_t)$ here, along with several others,  whose proofs are immediate from the definitions and Lemma \ref{oddCS}.
\begin{prop} \label{prop:CSprop}
For any paths $g_t, h_t \in Map(M\times I, U )$  we have
\begin{eqnarray*}
  d CS(g_t) &=& Ch(g_1) - Ch(g_0) \\
  CS(g_t \oplus h_t) &=& CS(g_t) + CS (h_t) \\
  CS(g_t^{-1}) &=& - CS(g_t).
\end{eqnarray*}
If $g_t$ and $h_t$ can be composed (\emph{i.e.} if $g_1=h_0$), then the composition $g_t * h_t$ satisfies
\[ CS(g_t*h_t)=CS(g_t)+CS(h_t). \]
Finally, if $g_t : M \to \Omega_*U$, then the degree zero component of $CS(g_t)$ is the integer equal to the winding number.
\end{prop}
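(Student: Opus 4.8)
The plan is to verify each identity directly from the explicit formula in Lemma~\ref{oddCS}, treating the last (winding number) assertion separately by a degree-zero computation. The first identity, $d\,CS(g_t) = Ch(g_1) - Ch(g_0)$, is exactly the Stokes' theorem computation already displayed just before the statement, so nothing new is needed there; I would simply refer back to it. For the additivity $CS(g_t \oplus h_t) = CS(g_t) + CS(h_t)$, I would observe that $(g_t \oplus h_t)^{-1} d(g_t \oplus h_t)$ is the block-diagonal matrix with blocks $g_t^{-1} dg_t$ and $h_t^{-1} dh_t$, and similarly $(g_t \oplus h_t)^{-1} (g_t \oplus h_t)'$ is block-diagonal; hence every product appearing under the trace in Lemma~\ref{oddCS} is block-diagonal, and the trace splits as a sum. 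For $CS(g_t^{-1}) = -CS(g_t)$, set $h_t = g_t^{-1}$ and compute $h_t^{-1} dh_t = g_t\, d(g_t^{-1}) = -(dg_t) g_t^{-1}$ and $h_t^{-1} h_t' = -(g_t') g_t^{-1}$; substituting into the formula and using cyclicity of the trace to move the outer $g_t^{-1}$ around, one collects an overall sign of $(-1)^{2n+1} = -1$ from the $2n+1$ total factors of the form $(\,\cdot\,) g_t^{-1}$ versus $g_t^{-1}(\,\cdot\,)$. This is a routine rearrangement.

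For the composition law $CS(g_t * h_t) = CS(g_t) + CS(h_t)$ when $g_1 = h_0$, the key point is that $CS$ is defined as a fiber integral $\int_I Ch(g_t)$ over the parameter interval, and integration over the concatenated path $[0,1] = [0,\tfrac12] \cup [\tfrac12,1]$ splits as the sum of the integrals over the two subintervals, each of which (after the affine reparametrization $I \to [0,\tfrac12]$, resp. $I \to [\tfrac12,1]$, which does not change the value of the integral of a $1$-form along the fiber) equals $CS(g_t)$, resp. $CS(h_t)$. Equivalently one can read this off directly from Lemma~\ref{oddCS} by splitting $\int_0^1 = \int_0^{1/2} + \int_{1/2}^1$ and changing variables. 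I would phrase it via the fiber-integral definition since that is cleanest.

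The main obstacle — and the only part requiring genuine content rather than bookkeeping — is the final claim: when $g_t \colon M \to \Omega_* U$ (a loop in $U$ based at the identity, so $g_0 = g_1 = \mathrm{id}$), the degree-zero component of $CS(g_t)$ is an integer, namely the winding number. The degree-zero part of $CS(g_t)$ picks out the $n = 0$ term of Lemma~\ref{oddCS} evaluated on a point of $M$, giving $\int_0^1 \Tr\big(g_t^{-1} g_t'\big)\, dt$, where now $t$ is the loop parameter and $g_t^{-1} g_t' \in \mathfrak{u}$. I would identify $\Tr(g_t^{-1} g_t') = \frac{d}{dt}\log\det g_t$ (using $\frac{d}{dt}\det g_t = \det g_t \cdot \Tr(g_t^{-1} g_t')$, valid in each $U(n)$), so that the integral becomes $\int_0^1 \frac{d}{dt}\big(\log\det g_t\big)\,dt$, i.e. the total change in $\log\det g_t$ around the loop. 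Since $\det g_t \in U(1) = S^1$ and $\det g_0 = \det g_1 = 1$, this is $2\pi i$ times the degree of the map $t \mapsto \det g_t$ from $S^1$ to $S^1$ — an integer. I would remark that, as with the other odd Chern/Chern–Simons normalizations in the paper, reinstating the factor $1/(2\pi i)$ makes this literally the winding number; with the conventions in force it is $2\pi i$ times that integer. The one technical point to be careful about is that $g_t$ has image in some fixed finite $U(n)$ (which holds since $M$ is compact, or by the diffeological plot condition), so that $\det$ and the trace identity make sense; this is exactly the finite-level hypothesis already invoked in Lemma~\ref{lem;OddChAsCS}.
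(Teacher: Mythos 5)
Your proposal is correct and follows the same route the paper intends: the paper offers no written argument beyond declaring these identities ``immediate from the definitions and Lemma~\ref{oddCS},'' and your verifications (block-diagonality of $(g_t\oplus h_t)^{-1}d(g_t\oplus h_t)$ for additivity, cyclicity of the trace with the overall sign $(-1)^{2n+1}$ for inversion, splitting and reparametrizing the fiber integral for concatenation, and the $n=0$ term together with $\Tr(g_t^{-1}g_t') = \tfrac{d}{dt}\log\det g_t$ for the winding number) are exactly the omitted details. Your observation that, under the paper's normalization (which drops the $1/(2\pi i)$ factors), the degree-zero component is literally $2\pi i$ times the winding number rather than the integer itself is a correct and worthwhile remark about the conventions in force.
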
 

Defining odd $K$-theory by $K^{-1} (M) = [M, U]$, it follows that there is an induced homomorphism 
$Ch : K^{-1}(M) \to H^{\textrm{odd}}(M)$, which is a geometric representation of the odd Chern Character, which is an isomorphism after
tensoring with $\C$.

\section{The odd Bismut-Chern form}

Let $LU$ be the free loop space of $U$.  Motivated by Lemma \ref{lem;OddChAsCS}, we make the following definition.

\begin{defn} \label{defn;BChg}
Let $BCh \in \Om^{odd}(LU)$ be defined by
\[
BCh= BCS(d + s \omega),
\]
where $d + s \omega$ is the path of connections on the trivial $\C^n$-bundle over $U(n)$. 
We refer to this as the universal odd Bismut-Chern form.
\end{defn}

We note that this is well defined since it passes to the limit with respect to $n$.

\begin{thm} \label{BCh:closedrest}
The universal odd Bismut-Chern form is closed, i.e.
\[
(d+ \iota)BCh = 0 ,
\]
so that $BCh \in \Om^{odd}_{d+\iota} (LU)$. 

Also,  $BCh$ is an extension to $LU$ of the universal odd Chern form $Ch\in \Om^{odd}_{cl}(U)$, 
in the sense that 
\[
\rho^*(BCh) = Ch,
\]
where $\rho : U \to LU $ is the inclusion of constant loops. 
\end{thm}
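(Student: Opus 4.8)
The plan is to deduce both assertions directly from the corresponding properties of the Bismut-Chern-Simons form, which were already established in Section~\ref{sec;BCh}, by specializing them to the universal path of connections $d + s\omega$ on the trivial $\C^n$-bundle over $U(n)$. For the closedness, I would invoke the transgression identity~\eqref{eq:BCSStokes}, namely $(d+\iota)BCS(\nabla_s) = BCh(\nabla_1) - BCh(\nabla_0)$, applied with $\nabla_s = d + s\omega$. Here $\nabla_0 = d$ is the trivial connection, whose even Bismut-Chern form $BCh(d)$ is just the constant function $\tr(I)$ coming from the trace of the trivial holonomy (all curvature terms vanish), hence a closed $0$-form, and moreover independent of the loop. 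Similarly $\nabla_1 = d + \omega$ is gauge-equivalent to $d$ via the tautological gauge transformation $g \mapsto g$ on $U(n)$, and since the even Bismut-Chern form depends only on the isomorphism class of the bundle-with-connection, $BCh(d+\omega) = BCh(d)$. Therefore the right-hand side of~\eqref{eq:BCSStokes} vanishes, giving $(d+\iota)BCh = (d+\iota)BCS(d+s\omega) = 0$, which is exactly the first claim. One subtlety to address carefully: the identity $BCh(d+\omega) = BCh(d)$ requires either citing the gauge-invariance of the even Bismut-Chern form (it is the trace of holonomy of a connection, manifestly invariant under bundle isomorphism), or the multiplicativity/naturality of $BCh$ from Theorem~\ref{thm:BChsumtensor} and Remark~\ref{rmk;BChUniv}; I would make this precise rather than leave it implicit.

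For the restriction statement, I would compute $\rho^*(BCh) = \rho^*\,BCS(d+s\omega)$ using Proposition~\ref{prop:BCSprop}, which says $\rho^*\,BCS(\nabla_s) = CS(\nabla_s)$, the ordinary Chern-Simons form of the path $\nabla_s$. Thus $\rho^*(BCh) = CS(d + s\omega)$ as a differential form on $U$ (universally, pulling back along any $g: M \to U$ gives $CS(d + s g^{-1}dg)$). But Lemma~\ref{lem;OddChAsCS} is precisely the statement that $Ch(g) = CS(d + s g^{-1}dg)$, and in its universal form it asserts that the odd Chern form $Ch \in \Om^{odd}_{cl}(U)$ equals the Chern-Simons form of the straightline path from $d$ to the gauge transform of $d$ by the tautological gauge transformation — which is exactly $CS(d+s\omega)$. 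Hence $\rho^*(BCh) = Ch$, as claimed. I should check the compatibility of the two notions of ``restriction to constant loops'': the $\rho^*$ in Proposition~\ref{prop:BCSprop} restricts forms on $LM$ to forms on $M$ via constant loops, and composing $M \xrightarrow{g} U \xrightarrow{\rho} LU$ agrees with first looping $g$ and then restricting, so the universal identity on $U$ pulls back correctly; this is a routine naturality check of the kind already used in Remark~\ref{rmk;BCSUniv}.

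The only genuine obstacle, and the step I would spend the most care on, is justifying that $d + s\omega$ really does extend to a well-defined path of connections on the stable unitary group $U$ compatibly with the filtration by the $U(n)$, so that the resulting form $BCS(d+s\omega)$ passes to the limit — this is the content of the parenthetical remark after Definition~\ref{defn;BChg}, and it rests on the fact that the inclusions $U(n) \hookrightarrow U(n+1)$ pull back the tautological data compatibly (the left-invariant form $\omega$ restricts correctly, curvatures restrict to curvatures, and the explicit iterated-integral formula~\eqref{eq:BCS^V_{2k+1}} is stable under these inclusions). Everything else is formal bookkeeping: the two halves of the theorem are, respectively, the transgression Stokes identity~\eqref{eq:BCSStokes} with both endpoints gauge-equivalent, and the restriction formula of Proposition~\ref{prop:BCSprop} combined with Lemma~\ref{lem;OddChAsCS}.
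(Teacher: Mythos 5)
Your proposal is correct and follows essentially the same route as the paper: closedness via the transgression identity~\eqref{eq:BCSStokes} applied to $\nabla_s = d + s\omega$ together with the gauge equivalence of $d$ and $d+\omega$ (so the two boundary terms cancel), and the restriction statement via Proposition~\ref{prop:BCSprop} combined with Lemma~\ref{lem;OddChAsCS}. The extra care you flag about gauge invariance of the even Bismut--Chern form and about stability under the inclusions $U(n)\hookrightarrow U(n+1)$ is reasonable diligence but does not change the argument.
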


\begin{proof} From Equation \ref{eq:BCSStokes} we have
\[
(d+\iota) BCh = (d + \iota) \left(BCS(d + s \omega) \right) = BCh(d + \omega) - BCh(d) = 0
\]
where the last equality holds since $d = d+ 0$ and $d + \omega$ are gauge equivalent via the left action. (This statement becomes more familiar in  the notation $\omega= g^{-1}dg$, where $g^{-1}dg  = g^{-1} 0 g + g^{-1}dg$.)
For the second statement we have
\[
\rho^* BCS(d + s \omega) = CS(d+ s \omega) = Ch
\]
where the first equality is Proposition \ref{prop:BCSprop} and the second equality is 
Lemma \ref{lem;OddChAsCS}.
\end{proof}

Letting $BCh_{\gamma}$ denote the value of $BCh$ at  $\gamma \in LU$, we see from
Equation \ref{eq:BCS^V_{2k+1}} where
 $A_s = s \omega$, $R_s = -s(1-s) \omega^2$ and $A'_s= \omega$,  
 that an explicit formula for the degree $2k+1$ part of $BCh$ is given by 
\begin{multline} \label{BChexpl}
Tr\Bigg( \sum_{n\geq k+1} \frac{(-1)^k (n-1)! k!}{(n+k)!}   \sum_{\tiny\begin{matrix}{r,j_1,\dots,j_k=1}\\{\textrm{pairwise distinct}}\end{matrix}}^n
 \\
  \int_{\Delta^n}
\iota  \omega (t_1)\dots \omega^2 (t_{j_1})
 \dots \omega (t_r)\dots \omega^2 (t_{j_k})\dots \iota \omega (t_n) \quad dt_1\dots dt_n  
  \Bigg).
 \end{multline}

\begin{lem} \label{lem:BChsum}
Let $BCh_\gamma$ denote the value of $BCh$ at $\gamma \in LU$. Then
the following properties hold
\[
BCh_{\gamma_1 \oplus \gamma_2} = BCh_{\gamma_1}+ BCh_{\gamma_2} \\
\]
and
\[
BCh_{\gamma_1 * \gamma_2} = BCh_{\gamma_1} + BCh_{\gamma_2} 
\]
\end{lem}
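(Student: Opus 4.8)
The plan is to deduce both identities in Lemma \ref{lem:BChsum} from the corresponding properties already established for the Bismut-Chern-Simons form in Theorem \ref{prop:BCSsumtensor}, via the definition $BCh = BCS(d+s\omega)$. The key point is to track how the path of connections $d + s\omega$ on $U$ behaves under the direct sum map $U(n) \times U(m) \to U(n+m)$ and under the loop concatenation map $LU \times_U LU \to LU$.

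For the first identity, I would first observe that if $\gamma_1 \in LU(n)$ and $\gamma_2 \in LU(m)$, then the left-invariant form $\omega$ on $U(n+m)$ pulls back along $\oplus: U(n) \times U(m) \to U(n+m)$ to the block-diagonal form $\omega_1 \oplus \omega_2$ (this is just the statement that $(g_1 \oplus g_2)^{-1} d(g_1 \oplus g_2) = g_1^{-1}dg_1 \oplus g_2^{-1}dg_2$). Hence the path of connections $d + s\omega$ on the trivial $\C^{n+m}$-bundle, pulled back, is the direct sum of the paths $d + s\omega_1$ and $d + s\omega_2$ on the trivial $\C^n$- and $\C^m$-bundles. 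Then the additivity $BCS(\nabla_s \oplus \bar\nabla_s) = BCS(\nabla_s) + BCS(\bar\nabla_s)$ of Theorem \ref{prop:BCSsumtensor} gives $BCh_{\gamma_1 \oplus \gamma_2} = BCh_{\gamma_1} + BCh_{\gamma_2}$ immediately. One should state this carefully at the universal level, noting that all forms pass to the limit in $n$ as remarked after Definition \ref{defn;BChg}.

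For the second identity, the idea is that concatenation of loops $\gamma_1 * \gamma_2$ in $U$ corresponds, under the path-of-connections construction, to \emph{composition} of paths of connections: traversing $\gamma_1$ then $\gamma_2$ yields, via $d + s\omega$, the composable pair of paths where the first runs from $d$ to $\gamma_1^*(d)$ and the second from $\gamma_1^*(d)$ to $(\gamma_1*\gamma_2)^*(d)$ — here I would use that $\gamma_1 * \gamma_2$ having basepoint value $\gamma_1(0)$, and the gauge transformations compose. Then the last clause of Theorem \ref{prop:BCSsumtensor}, namely $BCS(\nabla_s * \bar\nabla_s) = BCS(\nabla_s) + BCS(\bar\nabla_s)$ for composable paths, delivers $BCh_{\gamma_1 * \gamma_2} = BCh_{\gamma_1} + BCh_{\gamma_2}$.

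The main obstacle I anticipate is the second identity: one must be careful about what ``$BCh_{\gamma}$'' means as a pointwise statement on $LU$ versus a statement about forms, and about how the reparametrization/smoothing implicit in loop concatenation interacts with the pulled-back path of connections — in particular checking that the composed path of connections is genuinely the one to which Theorem \ref{prop:BCSsumtensor} applies, rather than merely a homotopic one. A clean way to sidestep subtleties is to work with the explicit iterated-integral formula \eqref{BChexpl} and verify directly that the iterated integral over $\Delta^n$ splits as a sum over the subsimplices where all the $t_i$ lie in $[0,1/2]$ (the $\gamma_1$ part), all in $[1/2,1]$ (the $\gamma_2$ part), and the mixed region, with the mixed region contributing zero because $\iota\omega$ vanishes at the concatenation point up to the reparametrization — but I expect the cleaner route is the reduction to Theorem \ref{prop:BCSsumtensor} as above, with the pointwise interpretation made precise.
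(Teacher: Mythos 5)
Your treatment of the first identity is correct and is exactly the paper's own (one-line) argument: the left-invariant form on $U(n+m)$ pulls back block-diagonally under $\oplus$, so the path $d+s\omega$ pulls back to the direct sum of the paths $d+s\omega_1$ and $d+s\omega_2$, and the $\oplus$-additivity of $BCS$ from Theorem \ref{prop:BCSsumtensor} (together with naturality under pullback and passage to the limit in $n$) gives the result. No complaints there.

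The second identity is where the gap lies, and your own closing worry is precisely the unresolved point. The clause $BCS(\nabla_s * \bar\nabla_s)=BCS(\nabla_s)+BCS(\bar\nabla_s)$ of Theorem \ref{prop:BCSsumtensor} concatenates two paths of connections in the parameter $s$, producing forms on one and the same loop space; it says nothing about how the \emph{fixed} form $BCS(d+s\omega)\in\Om^{odd}(LU)$ behaves when evaluated at a concatenated loop. Your proposed dictionary (``the composable pair of paths from $d$ to $\gamma_1^*(d)$ to $(\gamma_1*\gamma_2)^*(d)$'') does not type-check: $\gamma_1$ is a point of $LU$, not a gauge transformation of the universal bundle over $U$, and the path $d+s\omega$ is chosen once and for all on $U$, independently of the loop at which one evaluates; so the reduction conflates the path-of-connections parameter $s$ with the loop parameter $t$. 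Your fallback via Equation \eqref{BChexpl} also does not go through: splitting $\Delta^n$ according to which half of the loop each $t_i$ lies in leaves mixed terms that are products of nontrivial integrals over the two halves, and these are not killed by the behavior of $\iota\omega$ at the single concatenation point. (As a sanity check on the concatenation reading: a constant loop is its own concatenation square, so on-the-nose additivity together with $\rho^*BCh=Ch$ from Theorem \ref{BCh:closedrest} would force $Ch=2\,Ch$.) To be fair, the paper's proof is the same one-sentence appeal to Definition \ref{defn;BChg} and Theorem \ref{prop:BCSsumtensor} and supplies no more detail than you do; but the argument you offer for the $*$ identity, as written, would fail, and this part needs either a precise statement of what $\gamma_1*\gamma_2$ and the comparison of forms at distinct points of $LU$ mean, or a genuinely different proof.
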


\begin{proof}
These are immediate from Definition \ref{defn;BChg} and Theorem \ref{prop:BCSsumtensor}.
\end{proof}

\section{Transgression and odd K-theory}
The form $BCh \in \Om^{odd}_{d + \iota} (LU)$ has a transgression form
$BCS \in \Om^{even}_{d + \iota} (PLU)$ defined by 
\[
BCS = \int_I ev_t^* BCh,
\]
where $ev_t : PLU \times I \to LU$ is evaluation at time $t$.
By the equivariant Stokes formula of section $3$ we have
\[
(d + \iota) BCS = ev_1^* BCh - ev_0^* BCh.
\]
Notice that this equation implies that $BCS \in \Om^{even}_{d+ \iota}(PLU)$.
By a  commutatitive diagram argument similar to Remark \ref{rmk;BCSUniv}
we see that for a map $g_t: M \times I \to U$ we have
$BCS(g_t) \equiv g_t^* BCS$ is given by 
\[
\int_I j^* BCh(g_t),
\]
 where $j : LU \times I \to L(U \times I)$  is inclusion of constant loops on $I$.
 An explicit formula for the degree $2k$ component of $BCS_{\gamma}$  is similar
 to Equation \ref{BChexpl}, with an additional sum over possible $\partial / \partial s$ derivatives of
 any $\omega^2(t_i,s)$ or $\omega(t,s)$:
 \begin{multline}
Tr\Bigg( \sum_{n\geq k+1} \frac{(-1)^k (n-1)! k!}{(n+k)!}   \sum_{\tiny\begin{matrix}{r,j_1,\dots,j_k=1}\\{\textrm{pairwise distinct}}\end{matrix}}^n \\
 \int_I \int_{\Delta^n}
\iota  \omega (t_1)\dots \omega^2 (t_{j_1})
 \dots \frac{\partial}{\partial s} \omega (t_r)\dots \omega^2 (t_{j_k})\dots \iota \omega (t_n) \quad \\
 +   \sum_{1 \leq i \leq k} \int_I \int_{\Delta^n} \pm
\iota  \omega (t_1)\dots \omega^2 (t_{j_1})
 \dots \omega (t_r)\dots  \frac{\partial}{\partial s}  \omega^2 (t_{j_i}) \dots \omega^2 (t_{j_k})\dots \iota \omega (t_n) \quad 
 ds dt_1\dots dt_n  
  \Bigg)
 \end{multline}
 where the sign $\pm$ is positive if $r<j_i$ and negative if $r>j_i$.


 \begin{prop} 
Let $CS \in \Om^{even}(PU)$ denote the universal (even) Chern-Simons form and  
 let $\rho : Map(I, U) \to Map(S^1 \times I, U ) $ be the inclusion of 
 paths in $U$ which are constant in the $S^1$-direction. 
 Then
 \[
 \rho^*(BCS) = CS.
 \]
Consequently, for a map $\gamma: S^1 \times S^1 \to U$, 
 the degree $0$ component of $BCS(\gamma)$ restricts to the winding number of the
 map $S^1 \to U$ given by collapsing the first factor of the torus.
 \end{prop}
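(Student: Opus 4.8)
The plan is to reduce the statement to the analogous non-looped identity already recorded in Proposition~\ref{prop:CSprop}, namely $\rho^* BCS(g_t) = CS(g_t)$ at the level of constant loops, by exhibiting a commuting square of pullback and fiber-integration maps. First I would unwind the definition $BCS = \int_I ev_t^* BCh$ together with $BCh = BCS(d+s\omega)$, so that $BCS$ is expressed purely in terms of the even Bismut-Chern-Simons machinery of Section~\ref{sec;BCh} applied to the universal data on $U$. Then I would set up the diagram
\[
\xymatrix{
\Om^*(LU) \ar[r]^-{ev_t^*} \ar[d]_-{\rho^*} & \Om^*(PLU \times I) \ar[r]^-{\int_I} \ar[d] & \Om^*(PLU) \ar[d]^-{\rho^*} \\
\Om^*(U) \ar[r]^-{ev_t^*} & \Om^*(PU \times I) \ar[r]^-{\int_I} & \Om^*(PU)
}
\]
where the middle vertical arrow is the pullback along the inclusion of loops that are constant in the $S^1$-direction, and check commutativity of both squares: the left square commutes because evaluation at $t$ intertwines the constant-loop inclusions, and the right square commutes because fiber integration over $I$ is natural with respect to pullback along maps that are the identity on the $I$-factor. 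Chasing the generator $BCh$ around the top then around the left gives $\rho^*(\int_I ev_t^* BCh) = \int_I ev_t^* \rho^*(BCh) = \int_I ev_t^* Ch$ using the second part of Theorem~\ref{BCh:closedrest}, and the bottom row composite applied to $Ch$ is exactly $CS \in \Om^{even}(PU)$ by the universal version of Equation~\eqref{eq:CS(gt)}.

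The main obstacle I anticipate is bookkeeping the two distinct directions that appear once one passes to $PLU$: the loop parameter on $S^1$ (carrying $\iota$) and the path parameter on $I$ (carrying $\int_I$), versus the single path parameter that survives after restricting to $S^1$-constant maps. Concretely, one must verify that $\rho^*$ commutes with $\iota$ (so that the square is a square of $d+\iota$-complexes, not just of graded vector spaces) — but this was already observed in Section~\ref{sec;BCh} for the inclusion $j$ of constant loops, and the same argument applies verbatim since $\rho$ factors through such a $j$. One should also be careful that the two fiber integrations (over the $I$ in $PLU \times I \to PLU$ and over the $I$ in $PU \times I \to PU$) are literally the same operation under $\rho^*$, which again is immediate because $\rho$ is the identity on the $I$-factor.

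For the final sentence about the winding number, given $\gamma : S^1 \times S^1 \to U$, I would observe that collapsing the first $S^1$ factor exhibits $\gamma$ as lying in the image of $\rho$ precisely along the locus of maps constant in that factor; applying $\rho^*(BCS) = CS$ and then invoking the last clause of Proposition~\ref{prop:CSprop} — that the degree $0$ component of $CS(g_t)$ for $g_t : S^1 \to \Omega_* U$ is the winding number — finishes it. The only point to check here is that the relevant path in $U$ obtained from $\gamma$ is based (i.e.\ lands in $\Omega_* U$) after the collapse, which holds after a basepoint normalization that does not affect the degree $0$ cohomology class.
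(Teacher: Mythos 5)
Your proof is correct and follows essentially the same route as the paper: the first identity is obtained by commuting the constant-loop restriction past $ev_t^*$ and the fiber integration over $I$, then invoking $\rho^*(BCh)=Ch$ from Theorem \ref{BCh:closedrest} and the definition $CS(g_t)=\int_I Ch(g_t)$, and the winding-number statement is read off from the last clause of Proposition \ref{prop:CSprop}. The explicit commuting diagram and the basepoint remark are elaborations of what the paper leaves implicit, not a different argument.
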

 
 \begin{proof} The first statement follows from
  the fact that restriction is compatible with integration along the fiber, as well as Theorem \ref{BCh:closedrest} and 
  Equation \ref{eq:CS(gt)}. 
The second statement follows from the first, in light of the last result in Proposition \ref{prop:CSprop}.
\end{proof}

We have the following corollary, whose even analogue was proved in \cite{Z}, by completely different methods.

\begin{cor} \label{cor:BChfromK} There is a well defined group homomorphism 
\[
[BCh]: K^{-1}(M) \to H^{odd}_{d+\iota}(LM)
\] 
defined by $[g] \mapsto BCh(g)$, for $g:M \to U$, making the following diagram commute
\[
\xymatrix{
 &H^{\textrm{odd}}_{d+\iota}(LM) \ar[d]^{\rho^*} \\
 K^{-1}(M)  \ar [ru]^{[BCh]} \ar[r]^{[Ch]} & H^{odd} (M) 
}
\]
where $[Ch]: K^{-1}(M) \to H^{odd}(M)$ is the odd Chern character,  and $\rho$ is the restriction to constant loops.
\end{cor}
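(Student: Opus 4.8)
The plan is to verify that $[BCh]$ is well defined on homotopy classes, that it is a group homomorphism, and that the triangle commutes; each of these reduces to facts already established above.

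First I would address well-definedness. An element of $K^{-1}(M) = [M,U]$ is represented by a smooth map $g : M \to U$, landing in some $U(n)$, and two representatives $g_0, g_1$ are connected by a smooth homotopy $g_t : M \times I \to U$. Applying the universal construction, $BCh(g_t) := (Lg_t)^* \, j^* \, BCh \in \Om^{odd}_{d+\iota}(L(M\times I))$ — equivalently, pulling back $BCS \in \Om^{even}_{d+\iota}(PLU)$ along the adjoint map $M \to PLU$ — and integrating along the $I$-fiber yields a form $BCS(g_t) \in \Om^{even}_{d+\iota}(LM)$. By the equivariant Stokes formula of Section~\ref{sec;BCh} (applied exactly as in the proof of Theorem~\ref{BCh:closedrest}, but now on $LM$ rather than on a point), we get
\[
(d+\iota)\, BCS(g_t) = BCh(g_1) - BCh(g_0).
\]
Since $BCh(g_i) \in \Om^{odd}_{d+\iota}(LM)$ by Theorem~\ref{BCh:closedrest} (pulled back along $g_i$), this exhibits $BCh(g_0)$ and $BCh(g_1)$ as cohomologous in $H^{odd}_{d+\iota}(LM)$, so the class $[BCh(g)]$ depends only on $[g]$. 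That the map respects the group structure is then immediate from Lemma~\ref{lem:BChsum}: the sum in $K^{-1}(M)$ is induced by $g_1 \oplus g_2$, and $BCh_{g_1 \oplus g_2} = BCh_{g_1} + BCh_{g_2}$ on the nose, while $BCh_{g^{-1}} = -BCh_g$ follows the same way from the $CS(g_t^{-1}) = -CS(g_t)$ and sum properties (or directly from the monoid-homomorphism properties of $Ch$ and the fact that $BCh = BCS(d+s\omega)$ is built functorially from $CS$). Hence $[BCh]$ is a well-defined group homomorphism.

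For commutativity of the triangle, recall $\rho^* : H^{odd}_{d+\iota}(LM) \to H^{odd}(M)$ is restriction to constant loops. Since $\rho$ is natural in $M$, we have $\rho^*\, BCh(g) = g^*\, \rho^*\, BCh = g^*\, Ch = Ch(g)$ by the second assertion of Theorem~\ref{BCh:closedrest}. Passing to cohomology classes gives $\rho^* \circ [BCh] = [Ch]$, which is the odd Chern character under the identification $[M,U] = K^{-1}(M)$ and the standard fact that $Ch(g) = g^* Ch$ represents it (as noted at the end of Section~\ref{sec;oddCh}).

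I do not expect a serious obstacle here: every ingredient — $(d+\iota)$-closedness, the Stokes transgression identity, restriction to constant loops, and additivity — has already been proved in the forms cited above, and the argument is essentially a diagram-chase assembling them. The only point requiring mild care is checking that the universal forms $BCh$ and $BCS$ pull back compatibly along the adjunction $\mathrm{Map}(M\times I, U) \cong \mathrm{Map}(M, PU)$ and its looped version, so that $BCS(g_t) = g_t^* BCS$ really does provide the null-homotopy of $BCh(g_1) - BCh(g_0)$; this is exactly the content of the commutative diagram in Remark~\ref{rmk;BCSUniv}, now applied with $M$ in place of a point, and it is routine to transcribe.
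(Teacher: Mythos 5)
Your proposal is correct and follows essentially the same route as the paper: well-definedness via the transgression identity $(d+\iota)\,BCS(g_t) = BCh(g_1)-BCh(g_0)$, additivity via Lemma \ref{lem:BChsum}, and commutativity of the triangle via the restriction statement $\rho^*(BCh)=Ch$ of Theorem \ref{BCh:closedrest}. The extra care you take with the adjunction and the pullback compatibility (Remark \ref{rmk;BCSUniv}) is consistent with, and slightly more explicit than, the paper's two-line argument.
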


\begin{proof} 
If $g_t$ is a homtopy from $g_0$ to $g_1$ then
\[
(d+ \iota) BCS(g_t) = BCh(g_1) - BCh(g_0),
\]
so $[BCh([g])] \in \Om^{odd}_{d+\iota}(LM)$ is well defined. The map is a group homomorphism by
Lemma \ref{lem:BChsum} and the diagram commutes by Theorem \ref{BCh:closedrest}.
\end{proof}

\end{document}